\theoremstyle{plain}
\newtheorem{thm}{Theorem}[section]
\newtheorem*{thm*}{Theorem}
\newtheorem{prop}[thm]{Proposition}
\newtheorem{lem}[thm]{Lemma}
\newtheorem{cor}[thm]{Corollary}
\newtheorem*{cor*}{Corollary}
\theoremstyle{definition}
\theoremstyle{remark}
\newcommand{\C}{\mathbb C}
\newcommand{\Q}{\mathbb Q}
\newcommand{\R}{\mathbb R}
\newcommand{\Z}{\mathbb Z}
\newcommand{\fraka}{\mathfrak{a}}
\newcommand{\frakd}{\mathfrak{d}}
\newcommand{\frakp}{\mathfrak{p}}
\newcommand{\la}{\langle}
\newcommand{\ra}{\rangle}
\newcommand{\eps}{\epsilon}
\DeclareMathOperator{\Gal}{Gal}
\DeclareMathOperator{\Spec}{Spec}
\DeclareMathOperator{\repart}{Re}
\newenvironment{enumalph}
{\begin{enumerate}}
{\end{enumerate}}
\newcommand{\sbet}{{}_{\textup{\'et}}}
\begin{document}

\title{The Gauss higher relative class number problem}
\author{John Voight}
\address{Department of Mathematics and Statistics, 16 Colchester Avenue, University of Vermont, Burlington, Vermont 05401-1455}
\email{jvoight@gmail.com}

\begin{abstract}
Assuming the $2$-adic Iwasawa main conjecture, we find all CM fields with higher relative class number at most $16$: there are at least $31$ and at most $34$ such fields, and at most one is nonabelian.
\end{abstract}

\maketitle

The problem of determining all imaginary quadratic fields $K=\Q(\sqrt{d})$ of small class number $h(K)$ was first posed in Article 303 of Gauss' \emph{Disquisitiones Arithmeticae}.  It would take almost 150 years of work, culminating in the results of Stark \cite{Stark} and Baker \cite{Baker}, to determine those fields with class number at most two: there are exactly $27$, the last having discriminant $d=-427$.  (See Goldfeld \cite{Goldfeld} or Stark \cite{StarkHistory} for a history of this problem.)  Significant further progress has been made recently by Watkins \cite{Watkins}, who enumerated all such fields $K$ with class number $h(K) \leq 100$.  

One interesting generalization of the Gauss class number problem is to replace $\Q$ by a totally real field $F$.  Let $K/F$ be a CM extension, i.e., $K$ is a totally imaginary quadratic extension of a totally real field $F$, and let $[F:\Q]=n$.  We have the divisibility relation $h(F) \mid h(K)$, and we denote by $h^-(K)=h(K)/h(F)$ the relative class number.  It is known that there are only finitely many CM fields with fixed relative class number \cite{StarkRelative}.  The complete list of CM fields of relative class number one is still unknown (see Lee-Kwon \cite{LeeKwon} for an overview).  However, many partial results are known: for example, there are exactly $302$ imaginary abelian number fields $K$ with relative class number one \cite{ChangKwon}, each having degree $[K:\Q] \leq 24$.  

The integer $h^-(K)$ can be determined by the analytic relative class number formula, as follows.  Let $\chi:\Gal(K/F) \to \{\pm 1\}$ denote the nontrivial character associated to the extension $K/F$ and let $L(\chi,s)$ denote the Artin $L$-function associated to $\chi$.  Then
\[ L(\chi,0)=\frac{2^n}{Q(K)}\frac{ h^-(K)}{w(K)},  \]
where $w(K)=\#\mu(K)$ is the number of roots of unity in the field $K$ and $Q(K)=[\Z_K^*:\Z_F^*\,\mu(K)] \in \{1,2\}$ is the Hasse $Q$-unit index.  

In this article, we consider the further generalization of the Gauss problem to higher relative class numbers of CM fields.   Let $E$ be a number field with ring of integers $\Z_E$ and let $m \in \Z_{\geq 3}$ be an odd integer.  We define the \emph{higher class group} of $E$ to be
\[ H^2(\Spec \Z_E,\Z(m)) = \prod_p H\sbet^2(\Spec \Z_E[1/p], \Z_p(m)). \]
The group $H^2(\Spec \Z_E,\Z(m))$ is finite, and we let $h_m(E)$ denote its order.  For $p \neq 2$, the Quillen-Lichtenbaum conjecture (which appears to have been proven by Voevodsky-Rust-Suslin-Weibel) implies that the \'etale $\ell$-adic Chern character
\[ K_{2m-2}(\Z_E) \otimes \Z_p \xrightarrow{\sim} H\sbet^2(\Spec \Z_E[1/p], \Z_p(m)) \]
is an isomorphism, thus $h_m(E)$ and $\#K_{2m-2}$ agree up to a power of two.  (See \S 1 for more detail.)

For the CM extension $K/F$, we define the \emph{higher relative class number} to be
\[ h_m^{-}(K) = \frac{h_m(K)}{h_m(F)} \in \Z. \]
In analogy with the usual class number, an analytic higher relative class number formula holds: up to a power of $2$, we have the equality
\begin{equation} \label{Lvalueconj}
L(\chi,1-m)=(-1)^{n\left(\frac{m-1}{2}\right)} \frac{2^{n+1}}{Q_m(K)} \frac{h_m^{-}(K)}{w_m(K)},
\end{equation}
where $w_m(K) \in \Z_{>0}$ and $Q_m(K) \in \{1,2\}$ are the number of higher roots of unity and the higher $Q$-index, respectively.  Assuming the Iwasawa main conjecture for $p=2$, the formula (\ref{Lvalueconj}) holds exactly (see Kolster \cite{Kolster} and the discussion below).

Our main result is as follows.

\begin{thm*}
Suppose that the Iwasawa main conjecture holds for $p=2$.  Then there are at least $31$ and at most $34$ pairs $(K/F,m)$ where $h_m^-(K) \leq 16$.
\end{thm*}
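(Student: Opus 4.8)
The plan is to turn the enumeration into an effective finiteness statement by feeding the analytic formula \eqref{Lvalueconj} into the functional equation, and then to carry out an exhaustive but finite search. Assuming the main conjecture so that \eqref{Lvalueconj} is an exact equality, I would solve for the quantity of interest,
\[ h_m^{-}(K) = \frac{Q_m(K)\,w_m(K)}{2^{n+1}}\,\bigl|L(\chi,1-m)\bigr| . \]
Since $\chi$ is the quadratic character of the CM extension $K/F$ — odd at each of the $n$ infinite places of $F$, so its archimedean factor is $\Gamma\bigl((s+1)/2\bigr)^{n}$, and of conductor $\frakd_{K/F}$ with absolute norm $f = N_{F/\Q}\frakd_{K/F} = d_K/d_F^{2}$ — the completed $L$-function (which is $L(\chi,s)$ times $f^{s/2}$ and this archimedean factor, up to constants) satisfies $\Lambda(\chi,s) = \pm\,\Lambda(\chi,1-s)$. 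Evaluating at $s=m$ and simplifying the resulting $\Gamma$-quotient via the reflection and duplication formulas yields an identity of the form
\[ \bigl|L(\chi,1-m)\bigr| = f^{(2m-1)/2}\left(\frac{2^{1-m}(m-1)!}{\pi^{(4m-1)/2}}\right)^{n}\bigl|L(\chi,m)\bigr| . \]

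First I would pin down each factor on the right. The value $L(\chi,m) = \prod_{\frakp}(1-\chi(\frakp)N\frakp^{-m})^{-1}$ is controlled for $m \geq 3$ by its Euler product, giving effective two-sided bounds close to $1$; the number of higher roots of unity $w_m(K)$ has a known explicit description as a product of local terms governed by the cyclotomic character on $K$, growing only polynomially in $m$, while $Q_m(K) \in \{1,2\}$. The genuine finiteness and effectivity I would extract from Stark-type analytic lower bounds (via the explicit formula and Brauer--Siegel, as in \cite{StarkRelative}), combined with Odlyzko discriminant bounds: these force $\bigl|L(\chi,1-m)\bigr|$, and hence $h_m^{-}(K)$, to grow as any of $n$, $m$, or $d_K$ grows. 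The inequality $h_m^{-}(K) \leq 16$ then confines the search to an explicit finite box $n \leq N_0$, $m \leq M_0$, $d_K \leq D_0$.

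Within this box I would enumerate the candidate CM fields and compute each higher relative class number. For $K$ abelian over $\Q$ the computation is exact: one has $L(\chi,1-m) = \prod_{\psi} L(\psi,1-m) = \prod_{\psi}\bigl(-B_{m,\psi}/m\bigr)$, a product of generalized Bernoulli numbers over the odd Dirichlet characters $\psi$ of $\Gal(K/\Q)$, so $h_m^{-}(K)$ can be read off rationally and its $2$-part settled using the main conjecture together with the explicit $w_m(K)$ and $Q_m(K)$. Running over the classified abelian CM fields in the box and recording those with $h_m^{-}(K) \leq 16$ produces the bulk of the list. The nonabelian case is handled separately: there $L(\chi,1-m)$ is the special value of an Artin $L$-function of dimension $>1$, and one bounds the finitely many possibilities and checks them individually.

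The main obstacle I expect is twofold, and it accounts for the gap between $31$ and $34$. First, certifying completeness of the field enumeration near the top of the discriminant range — proving that no CM field has been missed, especially in the nonabelian direction, where exhaustive tables are thinner — is delicate, which is why the theorem asserts at most one nonabelian example rather than pinning it down. Second, while the main conjecture renders \eqref{Lvalueconj} exact, extracting the precise power of $2$ (through $Q_m(K)$, $w_m(K)$, and the $2$-adic valuation of the $L$-value) for each borderline field requires care, and the fields for which this certification cannot be completed are exactly those responsible for the residual ambiguity in the count.
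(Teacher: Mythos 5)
There are two genuine gaps. First, your functional equation is wrong in a way that breaks the entire finiteness mechanism. The completed $L$-function of the quadratic character $\chi$ of $K/F$ carries the factor $(d_F\,N\frakd_{K/F})^{s/2}$, not $(N\frakd_{K/F})^{s/2}$: writing $L(\chi,s)=\zeta_K(s)/\zeta_F(s)$ and dividing the two completed zeta functions, the discriminant factor is $(d_K/d_F)^{s-1/2}=(d_F\,N\frakd_{K/F})^{s-1/2}$, as in the paper's equation (\ref{lchi1-m}); your identity has only $f^{(2m-1)/2}$ with $f=N\frakd_{K/F}$ (your power of $\pi$ is also off, but that is secondary). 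The dropped $d_F^{m-1/2}$ is precisely what produces the root-discriminant bound $\delta_F \leq (2h_m^-)^{2/((2m-1)n)}C(m)$ of (\ref{mainest}), which is then played against the \emph{lower} bounds of Odlyzko on $\delta_F$ to force $n \leq 6$, $m \leq 19$, and $d_F$ into an explicit range. With your version, an extension with $\frakd_{K/F}=(1)$ --- and these occur, e.g.\ $\Q(\zeta_{12})/\Q(\sqrt{3})$ and several entries of Table 4.2 --- contributes nothing to the lower bound on $h_m^-$ beyond an exponentially small constant in $n$, so nothing confines $d_F$ (or $n$) and your ``finite box'' never materializes. Your appeal to Stark-type/Brauer--Siegel lower bounds does not repair this: at $s=m\geq 3$ the Euler product gives the trivial two-sided bound $\gamma(m)^n \leq L(\chi,m) \leq \zeta(m)^n$ (Lemma \ref{Lchigamma}), and no analytic input makes $|L(\chi,1-m)|$ grow with $d_K$ unless the $d_F$ factor is present; Odlyzko bounds lower-bound $\delta_F$ in terms of degree and cannot substitute for it.

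Second, your diagnosis of the $31$--$34$ ambiguity is wrong on both counts. The enumeration in the paper is certified complete: the Odlyzko comparison pins down the finite sets $NF_m(n)$ of totally real fields (known exhaustively from tables at these discriminants), and the relative quadratic extensions are enumerated by Martinet's relative version of Hunter's theorem, with no gap in the nonabelian direction. Likewise $w_m$ is computed \emph{exactly} (by checking whether the $q$th cyclotomic polynomial factors over $K$ into degrees at most $m$, via Lemma \ref{qmidwm}), and the $L$-value is pinned down exactly from the numerical value of $h_m^-/(w_m Q_m)$ since the denominator is bounded. The sole source of uncertainty is the higher $Q$-index $Q_m \in \{1,2\}$, which the tests of Proposition \ref{Qmtests} (plus one divisibility argument, for $K=\Q(\sqrt{-15},\sqrt{-2})$) fail to resolve in exactly five cases; three of these (``$11$ or $22$'', ``$15$ or $30$'', ``$16$ or $32$'' in Table 4.2) straddle the cutoff $16$, whence at least $31$ and at most $34$. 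The ``at most one nonabelian'' in the abstract reflects that the single nonabelian candidate $\Q\bigl(\sqrt{-(5+\sqrt{17})/2}\bigr)$ is one of these three unresolved $Q_3$ cases --- not any thinness of nonabelian tables. Your abelian Bernoulli-number computation is fine as far as it goes, but it does not touch $Q_m$, which is an index of \'etale cohomology groups at $2$, not a $2$-adic valuation of the $L$-value.
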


The extensions are listed in Tables 4.1--4.2 in Section 4, and explains the calculations of Henderson ($h_m^-(K)=1$ and $F=\Q$) reported in Kolster \cite[\S 3]{Kolster}.

We begin in Section 1 by giving the necessary background.  In Section 2, we estimate the size of the higher relative class number using the analytic formula in an elementary way, and prove a statement in the spirit of the Brauer-Siegel theorem.  In Section 3, we combine these estimates with the Odlyzko bounds to reduce the problem to a finite computation and then carry it out to prove our main result.  We conclude in Section 4 by tabulating the fields.

The author wishes to thank Manfred Kolster for suggesting this problem and for his helpful comments and the anonymous referee for a careful reading.

\section{Background}

In this section, we state in detail the higher relative class number formula, which determines analytically the order of the higher relative class number of a CM extension of number fields.  

We begin by introducing the $L$-function and its functional equation---see Lang \cite[Chapter XIII]{Lang} or Neukirch \cite[Chapter VII]{Neukirch} for a reference.  Let $E$ be a number field with ring of integers $\Z_E$, absolute discriminant $d_E$, and degree $[E:\Q]=r_1+2r_2=n$, where $r_1,r_2$ denote the number of real and pairs of complex places of $E$.  Let $\zeta_E(s)$ be the Dedekind zeta function of $E$, defined by
\[ \zeta_E(s)=\sum_{\fraka \subset \Z_E} \frac{1}{(N\fraka)^s}=\prod_{\frakp \subset \Z_E} \left(1-\frac{1}{(N\frakp)^s}\right)^{-1} \]
for $s \in \C$ with $\repart s > 1$.  Then $\zeta_E(s)$ has an analytic continuation to $\C \setminus \{1\}$ with a simple pole at $s=1$.  Define the completed zeta function of $E$ by
\[ \xi_E(s)=\left(\frac{d_E}{4^{r_2}\pi^n}\right)^{s/2} \Gamma(s/2)^{r_1} \Gamma(s)^{r_2} \zeta_E(s). \]
Then $\xi_E$ satisfies the functional equation $\xi_E(1-s)=\xi_E(s)$, and it follows that
\begin{equation} \label{funceq}
\zeta_E(1-s)=\zeta_E(s)\left(\frac{d_E}{4^{r_2} \pi^n}\right)^{s-1/2}\frac{\Gamma(s/2)^{r_1}\Gamma(s)^{r_2}}{\Gamma((1-s)/2)^{r_1}\Gamma(1-s)^{r_2}}.
\end{equation}

Now let $K/F$ be a CM extension of number fields with $[F:\Q]=n$ and let $\chi$ denote the nontrivial character of $\Gal(K/F)$.  Then the Artin L-function
\begin{equation} \label{defofL}
L(\chi,s) = \frac{\zeta_K(s)}{\zeta_F(s)}
\end{equation}
has an analytic continuation to $\C$, and for $s \in \C$ with $\repart s > 1$ we have
\begin{equation} \label{Lprod}
L(\chi,s) =\sum_{\fraka \subset \Z_F} \frac{\chi(\fraka)}{(N\fraka)^s} = \prod_{\frakp \subset \Z_F} \left(1-\frac{\chi(\frakp)}{(N\frakp)^s}\right)^{-1}.
\end{equation}

Applying equation (\ref{funceq}) to (\ref{defofL}) we obtain after simplification that
\begin{equation} \label{expandlval}
L(\chi,1-s)=L(\chi,s)\left(\frac{1}{(4\pi)^n}\frac{d_K}{d_F}\right)^{s-1/2} 
\left(\frac{\Gamma(s)\Gamma((1-s)/2)}{\Gamma(s/2)\Gamma(1-s)}\right)^n.
\end{equation}

We now define the higher relative class group, a group whose order is determined by values of $L(\chi,s)$ at negative even integers---see Kolster \cite{KolsterIntro,Kolster} for a more complete treatment.  Let $m \in \Z_{\geq 3}$ be odd.  For a prime $p$, we denote by $H\sbet^i(\Z_E[1/p], \Z_p(m))$ the $i$th \'etale cohomology group of $\Spec \Z_E[1/p]$ with $m$-fold twisted $\Z_p$-coefficients.  Define the ($m$th) \emph{higher class group} of $E$ by
\[ H^2(\Z_E,\Z(m)) = \prod_p H\sbet^2(\Z_E[1/p], \Z_p(m)). \]
There exists a homomorphism $K_{2m-2}(\Z_E) \to H^2(\Z_E,\Z(m))$ with finite cokernel \cite{DF, Soule} (in fact, supported at $2$), and it follows that $H^2(\Z_E,\Z(m))$ is finite, since $K_{2m-2}(\Z_E)$ is finite (a result of Borel \cite{Borel} and Quillen \cite{Quillen}).  We let $h_m(E)=\# H^2(\Z_E,\Z(m))$ denote the ($m$th) \emph{higher class number} of $E$.  We again have the divisibility $h_m(F) \mid h_m(K)$, and we define the \emph{higher relative class number} of the CM extension $K/F$ to be the quotient
\[ h_m^{-}(K) = \frac{h_m(K)}{h_m(F)} \in \Z. \]

Two other quantities appear in the higher class number formula.  First, let
\[ H^0(E,\Q/\Z(m))=\prod_p H^0(E, \Q_p/\Z_p(m)) \] 
be the group of \emph{higher roots of unity}, given in terms of Galois cohomology (invariants), and let $w_m(E)=\#H^0(E,\Q/\Z(m))$ denote its order.  By definition, $w_m(E)$ is the largest integer $q$ such that $G=\Gal(E(\zeta_q)/E)$ has exponent dividing $m$, where $\zeta_q$ denotes a primitive $q$th root of unity.

We have the following lemma that characterizes $w_m(E)$.  If $q$ is the power of a prime, let $\Q(\zeta_q)^{(m)}$ denote the subfield of $\Q(\zeta_q)$ of index $\gcd(\phi(q),m)$.

\begin{lem} \label{qmidwm}
If $q$ is a power of a prime, then $q \mid w_m(E)$ if and only if $E$ contains $\Q(\zeta_q)^{(m)}$.
\end{lem}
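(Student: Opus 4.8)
The plan is to translate everything into the Galois theory of cyclotomic fields and reduce the assertion to the structure of $(\Z/q\Z)^*$. The starting point is the characterization of $w_m(E)$ recalled just above the lemma: it is the largest integer $q_0$ for which $\Gal(E(\zeta_{q_0})/E)$ has exponent dividing $m$. First I would show that for any prime power $q$,
\[ q \mid w_m(E) \iff \Gal(E(\zeta_q)/E) \text{ has exponent dividing } m. \]
For the forward implication, if $q \mid w_m(E)=q_0$ then $E(\zeta_q) \subseteq E(\zeta_{q_0})$, so $\Gal(E(\zeta_q)/E)$ is a quotient of $\Gal(E(\zeta_{q_0})/E)$ and inherits exponent dividing $m$. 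For the reverse, if $\Gal(E(\zeta_q)/E)$ has exponent dividing $m$, set $q'=\operatorname{lcm}(q,q_0)$; since $\zeta_{q'}$ is generated over $E$ by $\zeta_q$ and $\zeta_{q_0}$, the group $\Gal(E(\zeta_{q'})/E)$ embeds into $\Gal(E(\zeta_q)/E) \times \Gal(E(\zeta_{q_0})/E)$ and hence also has exponent dividing $m$; maximality of $q_0$ then forces $q'=q_0$, i.e.\ $q \mid w_m(E)$.

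Next I would pass from $E(\zeta_q)$ to $\Q(\zeta_q)$ using the standard isomorphism for a compositum with a Galois extension: restriction of automorphisms gives $\Gal(E(\zeta_q)/E) \cong \Gal(\Q(\zeta_q)/E_0)$, where $E_0 := E \cap \Q(\zeta_q)$. Writing $G := \Gal(\Q(\zeta_q)/\Q) \cong (\Z/q\Z)^*$ and $H := \Gal(\Q(\zeta_q)/E_0) \subseteq G$, the condition from the previous paragraph becomes: \emph{$H$ has exponent dividing $m$}. On the other side, since $\Q(\zeta_q)^{(m)} \subseteq \Q(\zeta_q)$, the containment $E \supseteq \Q(\zeta_q)^{(m)}$ is equivalent to $E_0 \supseteq \Q(\zeta_q)^{(m)}$, which by the Galois correspondence is equivalent to $H \subseteq G^{(m)}$, where $G^{(m)} := \Gal(\Q(\zeta_q)/\Q(\zeta_q)^{(m)})$ has order $\gcd(\phi(q),m)$. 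Thus the lemma reduces to the purely group-theoretic equivalence that $H$ has exponent dividing $m$ if and only if $H \subseteq G^{(m)}$.

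To finish I would identify $G^{(m)}$ with the set of elements of $G$ of order dividing $m$, treating the two prime cases separately. When $q=p^k$ with $p$ odd, $G \cong (\Z/p^k\Z)^*$ is cyclic of order $\phi(q)$, so the elements of order dividing $m$ form the unique subgroup of order $\gcd(\phi(q),m)$, which is exactly $G^{(m)}$; hence every element of $H$ has order dividing $m$ if and only if $H \subseteq G^{(m)}$. When $q=2^k$, the subtle point is that $G$ is then a $2$-group and need not be cyclic, so a priori "the subfield of index $\gcd(\phi(q),m)$" could be ambiguous. This is where the hypothesis that $m$ is odd is essential: $\gcd(\phi(q),m)=\gcd(2^{k-1},m)=1$, so $\Q(\zeta_q)^{(m)}=\Q(\zeta_q)$ and $G^{(m)}=\{1\}$, while simultaneously a $2$-group $H$ has exponent dividing the odd number $m$ precisely when $H=\{1\}$. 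Both sides therefore collapse to $H=\{1\}$ and the equivalence holds. The main obstacle is precisely this handling of $p=2$: the non-cyclicity of $(\Z/2^k\Z)^*$ would obstruct both the definition of $\Q(\zeta_q)^{(m)}$ and the identification of $G^{(m)}$ with the $m$-torsion, and it is the oddness of $m$ that resolves it by forcing the index to be $1$.
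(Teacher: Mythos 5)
Your proof is correct and takes essentially the same route as the paper's: both reduce via the compositum isomorphism to a subgroup $H \subseteq \Gal(\Q(\zeta_q)/\Q) \cong (\Z/q\Z)^\times$, handle odd $q$ by cyclicity (exponent dividing $m$ equals containment in the unique subgroup of order $\gcd(\phi(q),m)$), and handle even $q$ by noting that oddness of $m$ forces $H$ to be trivial. Your explicit lcm argument showing that $q \mid w_m(E)$ is equivalent to $\Gal(E(\zeta_q)/E)$ having exponent dividing $m$ merely fills in a step the paper treats as immediate from its stated characterization of $w_m(E)$.
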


\begin{proof}
Note that since $\Q(\zeta_q)$ is Galois, we have
\[ G=\Gal(E(\zeta_q)/E) \cong \Gal(\Q(\zeta_q)/(E \cap \Q(\zeta_q))) \subset \Gal(\Q(\zeta_q)/\Q) \cong (\Z/q\Z)^\times. \]
Since $m \geq 3$ is odd, if $q$ is even then $q \mid w_m(E)$ if and only if $\Q(\zeta_q) \subset E$, as claimed.  If $q$ is odd, then since $\Gal(\Q(\zeta_q)/\Q)$ is cyclic, we have $q \mid w_m(E)$ if and only if $G$ has order dividing $m$ if and only if $\Q(\zeta_q)^{(m)} \subset E$.
\end{proof}

Finally, we define the \emph{higher $Q$-unit index} of the CM extension $K/F$ by
\[ Q_m(K)=[H\sbet^1(K, \Z_2(m)) : H\sbet^1(F, \Z_2(m)) H^0(K, \Q_2/\Z_2(m))]. \]
Collecting results of Kolster \cite[\S 3]{Kolster}, we know the following facts about $Q_m$.

\begin{prop} \label{Qmtests}
\ 
\begin{enumalph}
\item $Q_m \in \{1,2\}$.
\item $Q_m=2$ if and only if $H^1(K/F, H\sbet^1(K,\Z_2(m)))=0$.  
\item If an odd prime of $F$ ramifies in $K$, then $Q_m=1$.
\item If no odd prime of $F$ ramifies in $K$, the field $F$ has only one prime lying above $2$, and $h(F)$ is odd, then $Q_m=2$.
\end{enumalph}
\end{prop}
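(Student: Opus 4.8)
The plan is to exploit that $G=\Gal(K/F)=\la\sigma\ra$ is cyclic of order $2$, generated by complex conjugation $\sigma$, and to read off every assertion from the $G$-module structure of $M:=H\sbet^1(K,\Z_2(m))$ together with its torsion submodule $W:=H^0(K,\Q_2/\Z_2(m))$; write also $M_F:=H\sbet^1(F,\Z_2(m))$. First I would record the two structural facts that drive the whole argument. On the one hand, the coefficient sequence $0\to\Z_2(m)\to\Q_2(m)\to\Q_2/\Z_2(m)\to 0$ together with $H^0(K,\Q_2(m))=0$ identifies $W$ with the torsion submodule of $M$, and $W$ is cyclic, being a subgroup of $\Q_2/\Z_2$. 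On the other hand, since $m$ is odd, $\sigma$ acts as $-1$ on the Tate twist, hence as $-1$ on $W$; meanwhile Borel's rank computation gives $\operatorname{rank}_{\Z_2}M=r_1(K)+r_2(K)=n=r_1(F)+r_2(F)=\operatorname{rank}_{\Z_2}M_F$, and restriction--corestriction ($\operatorname{cor}\circ\operatorname{res}=2$) forces $\sigma$ to act trivially on $M\otimes\Q_2$. Consequently $(1-\sigma)M\subseteq W$ and $(1-\sigma)W=2W$.

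For (a), I would run a short diagram chase comparing the defining index $Q_m=[M:\operatorname{res}(M_F)\,W]$ with the operator $\theta:=1-\sigma$. Since $\operatorname{res}(M_F)\subseteq M^G=\ker\theta$ and $\theta(W)=2W$, while (by descent) $\operatorname{res}(M_F)$ and $W$ together fill out $M^G$, applying $\theta$ identifies $Q_m$ with $[(1-\sigma)M:2W]$. Because $2W\subseteq(1-\sigma)M\subseteq W$ and $W$ is cyclic, so that $[W:2W]\le 2$, this index is $1$ or $2$, proving (a).

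For (b), I would compute the Tate cohomology of the cyclic group $G$ directly. On the free quotient $M/W$ the operator $1+\sigma$ acts as multiplication by $2$, so $\ker(1+\sigma\mid M)=W$; combined with $(1-\sigma)M\subseteq W$ this gives
\[ H^1(K/F,M)=\widehat H^1(G,M)=\frac{\ker(1+\sigma)}{\operatorname{im}(1-\sigma)}=\frac{W}{(1-\sigma)M}. \]
By the proof of (a), $(1-\sigma)M$ is either $W$ or $2W$, so this group vanishes precisely when $(1-\sigma)M=W$, i.e. precisely when $Q_m=2$. This is exactly (b).

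The real work is (c) and (d), and the main obstacle is to translate the vanishing of $H^1(K/F,M)=W/(1-\sigma)M$ into the stated ramification conditions. Here I would compare $M=H\sbet^1(\Z_K[1/2],\Z_2(m))$ with the Galois cohomology that sees all the ramified primes, by means of a localization (Poitou--Tate) sequence: the odd primes of $F$ that ramify in $K$, the splitting behaviour of the primes above $2$, and the parity of $h(F)$ all enter through the local terms and through an ambiguous-class computation, exactly as in the genus-theoretic treatment of the classical Hasse unit index. A ramified odd prime contributes a nontrivial class that keeps $(1-\sigma)M=2W$, whence $H^1(K/F,M)\neq 0$ and $Q_m=1$, giving (c); when no odd prime ramifies, there is a single prime above $2$, and $h(F)$ is odd, the corresponding obstruction vanishes, forcing $(1-\sigma)M=W$ and hence $Q_m=2$, giving (d). This last step is where one must lean on Kolster's detailed local analysis at $2$, and controlling the $2$-primary local contribution is the delicate point.
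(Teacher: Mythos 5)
Your parts (a) and (b) are correct, and in fact cleaner than a citation: with $M=H\sbet^1(K,\Z_2(m))$ and $W=H^0(K,\Q_2/\Z_2(m))$ the torsion submodule of $M$ (cyclic, and nonzero, since $G_K$ acts trivially on $\mu_2^{\otimes m}$, so $[W:2W]=2$ exactly), on which $\sigma$ acts by $-1$, your rank count plus restriction--corestriction does force $\sigma$ to act trivially on $M\otimes\Q_2$, hence $(1-\sigma)M\subseteq W$. Your appeal to ``descent'' can even be sharpened: since $H^0(K,\Z_2(m))=0$ ($\Z_2(m)$ is torsion free and the twisted action has infinite image), inflation--restriction gives $\operatorname{res}(M_F)=M^G$ on the nose. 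Then $1-\sigma$ induces an isomorphism $M/(M^G W)\cong (1-\sigma)M/2W$, and the chain $2W\subseteq(1-\sigma)M\subseteq W$ yields $Q_m\cdot\#H^1(K/F,M)=[W:2W]=2$, which is precisely (a) and (b) simultaneously. Note that the paper itself offers no proof of this proposition---it is stated as ``collecting results of Kolster [\S 3]''---so for (a) and (b) you have supplied an argument the paper only cites.

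The genuine gap is in (c) and (d), which carry all the arithmetic content. You correctly reduce them, via (b), to showing $H^1(K/F,M)\neq 0$ (resp.\ $=0$), but then you only name a tool (``localization/Poitou--Tate, as in the genus-theoretic treatment'') and assert the conclusion, explicitly deferring the delicate $2$-adic step to ``Kolster's detailed local analysis.'' Nothing in your text explains \emph{why} an odd ramified prime forces $(1-\sigma)M=2W$, nor where the hypotheses ``only one prime above $2$'' and ``$h(F)$ odd'' actually enter; as written these are restatements of (c) and (d), not proofs. For comparison, Kolster's mechanism (visible in the lemma appended to this paper, whose proof he communicated) is Kummer-theoretic rather than Poitou--Tate: writing $K=F(\sqrt{\delta})$, the kernel of $H\sbet^1(\Z_F[\frac{1}{2}],\Z/2\Z)\to H\sbet^1(\Z_K[\frac{1}{2}],\Z/2\Z)^G$ is the square class $\la\delta\ra$, which maps to $H^1(K/F,M)$ in a diagram with exact rows and columns; since $H\sbet^1(\Z_F[\frac{1}{2}],\Z/2\Z)\cong \Z_F[\frac{1}{2}]^{*/*2}$, an odd ramified prime means $\delta$ has odd support and its class survives (giving $Q_m=1$), while ``$h(F)$ odd'' and ``one prime above $2$'' serve to pin down explicit generators $-1,2,\epsilon,\pi$ of $\Z_F[\frac{1}{2}]^{*/*2}$ and, together with norm computations, to kill the obstruction (giving $Q_m=2$). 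Your skeleton for (c) and (d) is plausible, but in its current state those two parts are cited rather than proved---the same status they have in the paper.
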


With these definitions, we now state the higher relative class number formula.

\begin{prop}[{Kolster \cite{Kolster}}] \label{hrcnf}
We have
\begin{equation} \label{hrcnfform}
L(\chi,1-m)=(-1)^{n\left(\frac{m-1}{2}\right)} \frac{2^{n+1}}{Q_m(K)} \frac{h_m^{-}(K)}{w_m(K)}
\end{equation}
up to a power of $2$.  If the $2$-adic Iwasawa main conjecture holds, then \textup{(\ref{hrcnfform})} holds.
\end{prop}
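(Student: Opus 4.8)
The plan is to deduce the relative formula from the corresponding \emph{absolute} special-value formula applied separately to $K$ and to $F$, and then to divide. Since $L(\chi,s)=\zeta_K(s)/\zeta_F(s)$ by \eqref{defofL}, and since for odd $m$ both $\zeta_K$ and $\zeta_F$ vanish to the same order at $s=1-m$, the value $L(\chi,1-m)$ is the quotient of their leading Taylor coefficients. The common order of vanishing is $r_1+r_2=n$: this is Borel's rank $\dim_\Q K_{2m-1}(\Z_E)\otimes\Q$, which equals $r_1+r_2$ for odd $m$, and this is $n$ both for the totally real field $F$ (where $r_1=n$, $r_2=0$) and for the CM field $K$ (where $r_1=0$, $r_2=n$). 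So the zeros cancel and $L(\chi,1-m)$ is finite.

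For each field $E\in\{K,F\}$, the cohomological Lichtenbaum formula expresses this leading coefficient, up to sign and a power of $2$, as $R_m(E)\,h_m(E)/w_m(E)$, where $R_m(E)$ is the Borel regulator of $K_{2m-1}(\Z_E)$ and $h_m(E),w_m(E)$ are the orders introduced in Section~1. Prime by prime, this is where the substantive input lies: for each $p$ one identifies $H\sbet^2$ and $H\sbet^1$ with the even and odd $K$-groups via Quillen--Lichtenbaum, and relates the order of the $p$-adic \'etale cohomology to the special value through the Iwasawa main conjecture, which equates the characteristic ideal of the relevant Iwasawa module with the $p$-adic $L$-function interpolating $\zeta_E(1-m)$. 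For odd $p$ all of this is unconditional (Mazur--Wiles and Wiles for the main conjecture, together with the now-proven Quillen--Lichtenbaum conjecture), which pins down the odd part of the formula exactly and hence gives the statement ``up to a power of~$2$.''

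Taking the quotient, the torsion orders contribute $h_m(K)/h_m(F)=h_m^-(K)$ directly together with $w_m(F)/w_m(K)$, so it remains to evaluate $\pm\,R_m(K)w_m(F)/R_m(F)$. Here the CM structure is decisive: because $K/F$ is CM and $m$ is odd, the ranks of $K_{2m-1}(\Z_K)$ and $K_{2m-1}(\Z_F)$ coincide (the minus part is finite), so the transcendental parts of the two regulators cancel and the ratio is rational. Its precise value $(-1)^{n\left(\frac{m-1}{2}\right)}2^{n+1}/Q_m(K)$ is read off from the archimedean factors of the functional equation \eqref{expandlval}: the factor $2^{n}$ reflects the $n$ complex places of $K$ against the $n$ real places of $F$ (the factor of two distinguishing complex from real local regulators), the extra $2$ and the sign come from the $n$-th power of the gamma ratio evaluated through the residues of $\Gamma$ at the negative integers, and the index $Q_m(K)$---defined in Section~1 as $[H\sbet^1(K,\Z_2(m)):H\sbet^1(F,\Z_2(m))H^0(K,\Q_2/\Z_2(m))]$---records exactly the discrepancy between the regulator lattices of $K$ and of $F$.

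The main obstacle, and the reason for the hypothesis, is the prime $p=2$. Away from $2$ everything above is a theorem, so the formula holds exactly up to a power of two; but the $2$-primary comparison between \'etale cohomology and the $L$-value rests on the $2$-adic Iwasawa main conjecture, which we assume. I therefore expect the delicate part to be the $2$-adic bookkeeping: controlling $H\sbet^i(\Z_E[1/2],\Z_2(m))$ for $i=0,1,2$, the $2$-part of $w_m$ via Lemma~\ref{qmidwm}, and the index $Q_m$ via Proposition~\ref{Qmtests}, and verifying that these assemble into the stated power $2^{n+1}/Q_m(K)$ with the correct sign. This is precisely the content of Kolster's analysis \cite{Kolster}, which we invoke to conclude.
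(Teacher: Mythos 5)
The paper itself offers no proof of Proposition \ref{hrcnf}: the statement is quoted from Kolster \cite{Kolster}, with only the accompanying remark that Wiles' proof of the main conjecture for odd $p$ \cite{Wiles} yields (\ref{hrcnfform}) up to a power of $2$, and that Kolster has established the exact formula when $K$ is abelian over $\Q$. So your sketch must be measured against Kolster's argument, and it takes a genuinely different route. You propose dividing two absolute leading-coefficient (Lichtenbaum-type) formulas for $\zeta_K$ and $\zeta_F$ at $s=1-m$ and cancelling Borel regulators; the structural points there are sound: both zeta functions vanish to order $n$ at $s=1-m$, the minus part of $K_{2m-1}(\Z_K)$ is finite, and a transfer argument ($\mathrm{tr}\circ\mathrm{incl}=1+\tau$, which is multiplication by $2$ on the plus part) shows the relevant lattice index is a power of $2$, so the odd parts match unconditionally via Wiles plus Quillen--Lichtenbaum at odd primes. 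Kolster's proof, by contrast, never introduces regulators: since $\chi$ is totally odd and $m$ is odd, $L(\chi,1-m)$ is a nonzero \emph{rational} number (Siegel--Klingen), its sign is read off from (\ref{lchi1-m}), and its $p$-adic valuation is computed prime by prime from the minus part of \'etale cohomology via the main conjecture---unconditionally for odd $p$, conditionally at $p=2$. That route avoids the $2$-adic Quillen--Lichtenbaum comparison entirely, which is delicate when $r_1>0$ and is exactly why the paper defines $h_m$ via \'etale cohomology rather than $K$-groups; your invocation of Quillen--Lichtenbaum ``for each $p$'' glosses over this.

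The one step of your sketch that is wrong as stated is the evaluation of the constant: $(-1)^{n\left(\frac{m-1}{2}\right)}2^{n+1}/Q_m(K)$ cannot be ``read off from the archimedean factors of the functional equation.'' Equation (\ref{expandlval}) (equivalently (\ref{lchi1-m})) supplies only the sign $(-1)^{n\left(\frac{m-1}{2}\right)}$ and the relation between $L(\chi,1-m)$ and $L(\chi,m)$; being an identity of complex $L$-values, it carries no information about $Q_m$, which is a $2$-adic index of \'etale cohomology lattices. In your notation the constant factors as $w_m(F)\cdot R_m(K)/R_m(F)$, where $w_m(F)=2$ exactly: by Lemma \ref{qmidwm}, for odd $q\geq 3$ the field $\Q(\zeta_q)^{(m)}$ is a nontrivial imaginary field (its fixing subgroup has odd order and so omits complex conjugation, and $\phi(q)\nmid m$), hence is not contained in the totally real $F$, while $4\mid w_m(F)$ would force $i\in F$. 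The remaining identification of the regulator ratio with $2^n/Q_m(K)$ is the substantive $2$-adic computation---comparing the lattices $H\sbet^1(F,\Z_2(m))$ and $H\sbet^1(K,\Z_2(m))$, with contributions at the $n$ real places of $F$---and is precisely the content of Kolster's analysis that you defer to at the end. As a roadmap your write-up is honest, since it openly invokes \cite{Kolster} for the $2$-adic bookkeeping, but the mechanism you propose for producing $2^{n+1}/Q_m(K)$ is not the one that works.
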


The proof of the Iwasawa main conjecture for $p$ odd by Wiles \cite{Wiles} implies that the formula (\ref{hrcnfform}) holds up to a power of $2$, and a proof of the $2$-adic main conjecture would imply it exactly.  Indeed, Kolster \cite{Kolster} has proven that (\ref{hrcnfform}) holds in the case that $K$ is abelian over $\Q$ by applying Wiles' proof of the $2$-adic main conjecture when $K$ is abelian.

\section{Estimates}

In this section, we estimate the size of the relative class number.  Throughout, we assume the truth of the higher relative class number formula (Proposition \ref{hrcnf}).  We retain the notation from the previous section, suppressing the dependence on $K$ whenever possible; in particular, we recall that $m \in \Z_{\geq 3}$ is odd.

We begin by substituting $s=m$ into (\ref{expandlval}).  From standard $\Gamma$-function identities, letting $m=2k-1$ we have
\[ \Gamma(m/2)=\frac{(2k)!}{4^k k!}\sqrt{\pi} \]
and
\[ \frac{\Gamma((1-m)/2)}{\Gamma(1-m)}=\frac{\Gamma(-k)}{\Gamma(-2k)}=(-1)^k\frac{(2k)!}{k!}. \]
Putting these together, we obtain
\[ \frac{\Gamma(m)\Gamma((1-m)/2)}{\Gamma(m/2)\Gamma(1-m)}=(-1)^{(m-1)/2}\frac{2^m(m-1)!}{\sqrt{\pi}} \]
hence
\begin{equation} \label{lchi1-m} L(\chi,1-m)=(-1)^{n\left(\frac{m-1}{2}\right)}L(\chi,m)\left(\frac{d_K}{d_F}\right)^{m-1/2}\left(\frac{2(m-1)!}{(2\pi)^m}\right)^n.
\end{equation}

Now, by the higher relative class number formula, we have
\begin{equation} \label{hmest}
h_m^{-}=|L(\chi,1-m)|\frac{w_m Q_m}{2^{n+1}} \geq \frac{|L(\chi,1-m)|}{2^{n+1}}
\end{equation}
since $Q_m,w_m \in \Z_{\geq 1}$.  From (\ref{lchi1-m}) and (\ref{hmest}) we obtain
\begin{equation} \label{est1}
h_m^{-} \geq \frac{1}{2}L(\chi,m)\left(\frac{d_K}{d_F}\right)^{m-1/2}\left(\frac{(m-1)!}{(2\pi)^m}\right)^n.
\end{equation}

We now estimate the value of $L(\chi,m)$. 

\begin{lem} \label{Lchigamma}
We have $\gamma(m)^n \leq L(\chi,m) \leq \zeta(m)^n$, where
\[ \gamma(m)=\prod_p \left(1+\frac{1}{p^m}\right)^{-1}. \]  
\end{lem}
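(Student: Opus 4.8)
The plan is to work directly from the Euler product (\ref{Lprod}) for $L(\chi,m)$, which converges absolutely since $m \geq 3$ forces $(N\frakp)^{-m} \leq 2^{-m} < 1$ at every prime, so in particular $L(\chi,m)$ is a product of positive real factors. The character $\chi$ takes values $\chi(\frakp) \in \{-1,0,1\}$ according as $\frakp$ is inert, ramified, or split in $K/F$. Since $t \mapsto (1-t)^{-1}$ is increasing on $(-\infty,1)$, each Euler factor is squeezed as
\[ \left(1 + \frac{1}{(N\frakp)^m}\right)^{-1} \leq \left(1 - \frac{\chi(\frakp)}{(N\frakp)^m}\right)^{-1} \leq \left(1 - \frac{1}{(N\frakp)^m}\right)^{-1}, \]
so taking the product over all $\frakp$ reduces the problem to comparing the two extreme products with $\gamma(m)^n$ and $\zeta(m)^n$.

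To make this comparison I would group the primes $\frakp$ of $F$ by the rational prime $p$ lying below them. Writing $\frakp_1,\dots,\frakp_g$ for the primes above $p$, with residue degrees $f_i$ and ramification indices $e_i$, we have $N\frakp_i = p^{f_i}$ and $\sum_i e_i f_i = n$, hence $\sum_i f_i \leq n$. The whole argument then rests on two elementary inequalities valid for $y = p^{-m} \in [0,1)$ and each integer $f_i \geq 1$:
\[ 1 + y^{f_i} \leq (1+y)^{f_i} \quad\text{and}\quad (1-y)^{f_i} \leq 1 - y^{f_i}. \]
The first is immediate from the binomial expansion, since the right-hand side contains the summands $1$ and $y^{f_i}$ among nonnegative terms; the second follows by factoring $1 - y^{f_i} = (1-y)(1 + y + \cdots + y^{f_i - 1})$ and noting that $(1-y)^{f_i - 1} \leq 1 \leq 1 + y + \cdots + y^{f_i-1}$.

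For the upper bound, the second inequality gives $(1 - p^{-f_i m})^{-1} \leq (1 - p^{-m})^{-f_i}$, so the factors above $p$ multiply to at most $(1-p^{-m})^{-\sum_i f_i} \leq (1-p^{-m})^{-n}$, using $\sum_i f_i \leq n$ together with $(1-p^{-m})^{-1} \geq 1$; taking the product over $p$ yields $L(\chi,m) \leq \zeta(m)^n$. Symmetrically, the first inequality gives $(1 + p^{-f_i m})^{-1} \geq (1+p^{-m})^{-f_i}$, so the factors above $p$ multiply to at least $(1+p^{-m})^{-\sum_i f_i} \geq (1+p^{-m})^{-n}$ (here $1+p^{-m}>1$, so the exponent bound $\sum_i f_i \leq n$ again goes the right way), whence $L(\chi,m) \geq \gamma(m)^n$. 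Ramified primes, where $\chi(\frakp_i)=0$ and the Euler factor equals $1$, cause no trouble, since $1$ lies between the two bounding quantities in the squeeze above. The only point genuinely requiring care is that ramification forces $\sum_i f_i \leq n$ rather than equality, which is precisely why I invoke the monotonicity of $t \mapsto (1\pm p^{-m})^{-t}$ in the exponent; beyond this the computation is entirely mechanical.
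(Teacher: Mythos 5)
Your proof is correct and follows essentially the same route as the paper: the paper also works from the Euler product (\ref{Lprod}), groups the factors $\bigl(1-\chi(\frakp)(N\frakp)^{-m}\bigr)^{-1}$ by the rational prime $p$ below $\frakp$, and squeezes the local product between $(1+p^{-m})^{-n}$ and $(1-p^{-m})^{-n}$. The only difference is one of detail: the paper simply asserts the grouped inequality, while you justify it via the residue degrees $f_i$ with $\sum_i e_i f_i = n$ and the elementary bounds $1+y^{f} \leq (1+y)^{f}$ and $(1-y)^{f} \leq 1-y^{f}$, correctly handling the ramified factors and the strict inequality $\sum_i f_i \leq n$.
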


\begin{proof}
From (\ref{Lprod}) we obtain
\[ L(\chi,m) = \prod_\frakp \left(1-\frac{\chi(\frakp)}{(N\frakp)^m}\right)^{-1}. \]
Organizing the product by primes $p$, we have
\[ \left(1+\frac{1}{p^m}\right)^{-n} \leq \prod_\frakp \left(1-\frac{\chi(\frakp)}{(N\frakp)^m}\right)^{-1} \leq \left(1-\frac{1}{p^m}\right)^{-n} \]
hence
\[ \gamma(m)^n \leq L(\chi,m) \leq \zeta(m)^n \]
as claimed.
\end{proof}

We compute easily that $\gamma(3) \geq 0.8463$, $\gamma(5) \geq 0.9653$, and $\gamma(m) \geq 0.9917$ if $m \geq 7$; clearly $\gamma(m) \to 1$ as $m \to \infty$.  Applying Lemma \ref{Lchigamma} to (\ref{est1}) we obtain
\begin{equation} \label{est!}
h_m^{-} \geq \frac{1}{2} \left(\frac{d_K}{d_F}\right)^{m-1/2} \left(\gamma(m)\frac{(m-1)!}{(2\pi)^m}\right)^n.
\end{equation}

We pause to prove the following proposition, which is in the spirit of the Brauer-Siegel theorem.  We have $d_K/d_F=d_F N\frakd_{K/F} \geq d_F$, where $\frakd_{K/F} \subset \Z_F$ denotes the relative discriminant of $K/F$.  

\begin{prop}
Let $\{K_i/F_i\}_i$ be a sequence of CM extensions with 
\[ [F_i:\Q] = o\left(\log(d_{K_i}/d_{F_i})\right). \]
Then
\[ \log h_m^{-}(K_i) \sim (m-1/2)\log(d_{K_i}/d_{F_i}) \]
as $i \to \infty$.
\end{prop}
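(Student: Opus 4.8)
The plan is to sandwich $\log h_m^-$ between two quantities that are both asymptotic to $(m-1/2)\log(d_K/d_F)$. Throughout write $n=[F:\Q]$ and $D=d_K/d_F$, suppressing the index $i$. Since $n\ge 1$, the hypothesis $n=o(\log D)$ already forces $\log D\to\infty$, so every term that is $O(1)$, or $o(\log D)$, or $O(n)$, will be negligible against the main term $(m-1/2)\log D$; I will use this repeatedly. For the lower bound I would simply take logarithms in (\ref{est!}):
\[ \log h_m^- \ge (m-1/2)\log D + n\log\bigl(\gamma(m)(m-1)!/(2\pi)^m\bigr) - \log 2. \]
As $m$ is fixed the coefficient of $n$ is a constant, so the middle term is $O(n)=o(\log D)$ and thus $\log h_m^-\ge (m-1/2)\log D + o(\log D)$.

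For the upper bound I would begin from the exact relation $h_m^-=|L(\chi,1-m)|\,w_mQ_m/2^{n+1}$ recorded in (\ref{hmest}), substitute the evaluation (\ref{lchi1-m}) of $|L(\chi,1-m)|$, and estimate $L(\chi,m)\le\zeta(m)^n$ by Lemma \ref{Lchigamma} and $Q_m\le 2$ by Proposition \ref{Qmtests}(a). After taking logarithms the powers of $2$ collapse to a bounded term, and one is left with
\[ \log h_m^- \le (m-1/2)\log D + n\log\bigl(\zeta(m)(m-1)!/(2\pi)^m\bigr) + \log w_m + O(1), \]
that is, $\log h_m^-\le (m-1/2)\log D + O(n) + \log w_m + O(1)$. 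Comparing the two displays, the whole proposition reduces to the single claim that $\log w_m(K)=o(\log D)$, and I expect this to be the only real obstacle.

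To prove $\log w_m(K)=o(\log D)$ I would control the prime factorization of $w_m(K)$ by means of Lemma \ref{qmidwm}. Every prime $\ell\mid w_m(K)$ satisfies $\Q(\zeta_\ell)^{(m)}\subseteq K$, a field of degree $(\ell-1)/\gcd(\ell-1,m)$ over $\Q$. If $(\ell-1)\mid m$ then $\ell\le m+1$, so there are only $O_m(1)$ such primes; for the remaining primes $[\Q(\zeta_\ell)^{(m)}:\Q]\ge 2$, and since the fields $\Q(\zeta_\ell)^{(m)}$ for distinct $\ell$ are linearly disjoint over $\Q$ (their intersection is unramified everywhere, hence $\Q$) with compositum inside $K$, the product of their degrees is at most $[K:\Q]=2n$, bounding their number by $\log_2(2n)$. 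Hence $w_m(K)$ has $O(\log n)$ distinct prime divisors. Moreover, if $\ell^{a}$ is the exact power of $\ell$ dividing $w_m(K)$, then $\Q(\zeta_{\ell^a})^{(m)}\subseteq K$ gives $\phi(\ell^a)\le [K:\Q]\gcd(\phi(\ell^a),m)\le 2nm$, and since $\ell^a/\phi(\ell^a)\le 2$ we get $\ell^a\le 4nm$. Therefore $w_m(K)\le (4nm)^{O(\log n)}$, so that $\log w_m(K)=O\bigl((\log n)^2\bigr)$, the implied constant depending only on $m$.

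It remains to note that $n=o(\log D)$ yields $\log n = O(\log\log D)$, whence $(\log n)^2=o(\log D)$ and $\log w_m(K)=o(\log D)$. Feeding this into the two displays above, both the upper and lower bounds become $\log h_m^-=(m-1/2)\log D+o(\log D)$, which is exactly $\log h_m^-(K_i)\sim (m-1/2)\log(d_{K_i}/d_{F_i})$. The crux of the argument, as flagged, is the elementary but slightly delicate bound on $w_m(K)$; the rest is bookkeeping with the estimates already established in (\ref{est!}), (\ref{hmest}), (\ref{lchi1-m}) and Lemma \ref{Lchigamma}.
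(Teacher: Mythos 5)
Your proof is correct, and while the outer skeleton coincides with the paper's (sandwich $\log h_m^-$ between the lower bound (\ref{est!}) and an upper bound like (\ref{hmupper}), then reduce everything to showing $\log w_m(K_i)=o(\log(d_{K_i}/d_{F_i}))$, which is (\ref{wmK}) in the paper), your proof of that key estimate is genuinely different. The paper goes through the discriminant: if $q=p^r \mid w_m$, then by Lemma \ref{qmidwm} the field $F$ contains $\Q(\zeta_q)^{(2m)}$ (or $\Q(\zeta_q)^+$ for $q$ even), so the conductor--discriminant formula forces $p^{\phi(q)/(2m)-1}\mid d_F$; packaging this into the multiplicative function $f(p^r)=p^{\phi(p^r)/(2m)-1}$ and showing $\log N = o(\log f(N))$ yields $\log w_m = o(\log d_F) \leq o(\log(d_K/d_F))$, \emph{without} ever invoking the degree hypothesis at this step. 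You instead bound $w_m$ purely in terms of the degree $n$: each prime power $\ell^a\mid w_m$ forces $\phi(\ell^a)\leq 2nm$, hence $\ell^a\leq 4nm$, and the linear disjointness of the relevant cyclotomic subfields caps the number of prime divisors at $O_m(\log n)$, giving $\log w_m = O_m\bigl((1+\log n)^2\bigr)$; you then consume the hypothesis $n=o(\log D)$ a second time via $\log n = O(\log\log D)$. Both routes are valid; the paper's buys a statement uniform in the degree ($w_m$ is small relative to $d_F$ alone), while yours buys an explicit, purely degree-theoretic bound on $w_m$ that is of independent interest. Two small remarks on your write-up: pairwise trivial intersections do not in general imply joint linear disjointness (compare $\Q(\sqrt{2}),\Q(\sqrt{3}),\Q(\sqrt{6})$), so you should run the ramification argument inductively --- the compositum of any subset of your fields is ramified only at the primes indexing that subset, so its intersection with the next field is unramified everywhere, hence $\Q$ by Minkowski, and the Galois property then multiplies the degrees --- but this is a one-line repair of the same idea. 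Also, you could have dispensed with the disjointness count entirely: since every prime divisor $\ell$ of $w_m$ satisfies $\ell \leq 4nm$, Chebyshev's bound $\pi(x)\log x = O(x)$ already gives $\log w_m \leq \pi(4nm)\log(4nm) = O_m(n) = o(\log D)$, which suffices.
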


\begin{proof}
Let $K/F$ be a CM extension with $[F:\Q]=n$.  From (\ref{est!}) we have
\begin{equation} \label{hmlower}
\log h_m^- \geq (m-1/2)\log(d_K/d_F) + n c(m)
\end{equation}
where $c(m)$ is a constant depending only on $m$.  On the other hand, by Lemma \ref{Lchigamma} we have $L(\chi,m) \leq \zeta(m)^n$.  Then applying equations (\ref{lchi1-m}) and (\ref{hmest}) with this estimate we obtain
\begin{equation} \label{hmupper}
\log h_m^{-} \leq \log w_m + (m-1/2)\log(d_K/d_F) + n \log \zeta(m).
\end{equation}
Putting together (\ref{hmlower}) and (\ref{hmupper}), since $[F_i:\Q]=o(\log(d_{K_i}/d_{F_i}))$ as $i \to \infty$, the result follows if we show that
\begin{equation} \label{wmK}
\log w_m(K_i) = o( \log(d_{K_i}/d_{F_i}) ).
\end{equation}

To prove (\ref{wmK}), we compare $w_m(K)$ and $d_K/d_F$ for a CM extension $K/F$.  Let $q=p^r$ be a power of a prime and suppose that $q \mid w_m$.  By Lemma \ref{qmidwm}, the field $K$ must contain $\Q(\zeta_q)^{(m)}$.  Suppose first that $q$ is odd.  It follows that $F$ must contain the field $\Q(\zeta_q)^{(2m)}$; since $d_{\Q(\zeta_q)^{(2m)}} \mid d_F$, by the conductor-discriminant formula \cite[Theorem 3.11]{Washington} we have
\[ p^{\phi(q)/(2m)-1} \mid d_{\Q(\zeta_q)^{(2m)}}. \]
If $q$ is even, then $F$ must contain the totally real subfield $\Q(\zeta_q)^+$ of $\Q(\zeta_q)$, and we similarly conclude that $p^{\phi(q)/2-1} \mid d_F$.  Define the multiplicative function $f$ with the value $f(p^r)=p^{\phi(p^r)/(2m)-1}$ for a prime power $p^r$. 

Now let $K_i/F_i$ be a subsequence with $w_m(K_i) \to \infty$; if no such subsequence exists, then we are done.  We now show that for any sequence of positive integers $n_i \to \infty$, we have $\log n_i = o(\log f(n_i))$.  The result then follows as
\[ \log w_{m_i} = o(\log f(w_{m_i})) = o(\log d_{F_i}) = o(\log(d_{K_i}/d_{F_i})) \]
as desired.

So let $\eps > 0$.  We prove that 
\[ r < \eps \left(\frac{\phi(p^r)}{2m}-1\right) \]
for all sufficiently large prime powers $p^r$, or also it is sufficient to show that
\begin{equation} \label{stupidpr}
\frac{\phi(p^r)}{4rm} > \frac{1}{\eps}.
\end{equation}
For $r=1$, clearly the inequality (\ref{stupidpr}) will be satisfied for $p$ sufficiently large; but then for the finitely many remaining primes $p$, the inequality holds for a sufficiently large power $r$.  Therefore it holds for any sufficiently large prime power $p^r$, and claimed.
\end{proof}

\begin{cor}
Let $F$ be a totally real field and $m \in \Z_{\geq 3}$ be odd.  Then for any $h \in \Z_{\geq 1}$, there are only finitely many CM extensions $K/F$ with $h_m^-(K) \leq h$.
\end{cor}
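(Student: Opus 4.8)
The plan is to read the required finiteness directly off the lower bound (\ref{est!}). Fix the totally real field $F$ and the odd integer $m \geq 3$; then $n = [F:\Q]$ and $d_F$ are constants, and since every CM extension $K/F$ is a totally imaginary quadratic extension of $F$, the degree $[K:\Q] = 2n$ is fixed as well. In particular the quantity
\[ C = \frac{1}{2}\, d_F^{-(m-1/2)} \left(\gamma(m)\frac{(m-1)!}{(2\pi)^m}\right)^n \]
is a positive constant depending only on $F$ and $m$ (note $\gamma(m) > 0$ and $m \geq 3$). Rewriting (\ref{est!}) via $(d_K/d_F)^{m-1/2} = d_K^{m-1/2}\, d_F^{-(m-1/2)}$, I would obtain the clean bound
\[ h_m^-(K) \geq C\, d_K^{m-1/2} \]
valid for every CM extension $K/F$.

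From here the argument is immediate. Since $m - 1/2 > 0$, the hypothesis $h_m^-(K) \leq h$ forces
\[ d_K \leq \left(\frac{h}{C}\right)^{1/(m-1/2)}, \]
so the absolute discriminant $d_K$ is bounded in terms of $F$, $m$, and $h$ alone. As $[K:\Q] = 2n$ is fixed, I would then invoke the Hermite--Minkowski theorem: there are only finitely many number fields of bounded degree and bounded absolute discriminant. A fortiori, only finitely many of these can be CM extensions of $F$, which is the assertion.

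The statement is thus a direct corollary of the estimate (\ref{est!}) together with Hermite--Minkowski, and presents no genuine obstacle; the only points requiring care are that the degree $[K:\Q]$ is pinned down by fixing $F$, so that Hermite--Minkowski applies with a fixed degree, and that the exponent $m - 1/2$ is strictly positive, so that a bound on $h_m^-(K)$ really does translate into a bound on $d_K$. As throughout this section, the inequality (\ref{est!}) rests on the higher relative class number formula of Proposition \ref{hrcnf}, which we are assuming.
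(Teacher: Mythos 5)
Your proof is correct and is essentially the paper's own argument: the corollary is stated without proof, but its intended justification is exactly the lower bound (\ref{est!}) — which the paper itself rewrites as (\ref{est!!}) and uses in the form (\ref{boundk}) to bound $d_K$ for fixed $F$ and $m$ — combined with the Hermite--Minkowski finiteness of number fields of bounded degree and discriminant. You correctly flag the two small points that make this work (the degree $[K:\Q]=2n$ is fixed once $F$ is, and $m-1/2>0$), as well as the standing assumption of Proposition \ref{hrcnf} throughout the section.
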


We return to estimating $h_m^{-}$ from below.  Rewriting (\ref{est!}) we obtain
\begin{equation} \label{est!!}
d_F \leq \frac{d_K}{d_F} \leq (2h_m^-)^{1/(m-1/2)} C(m)^n 
\end{equation}
where
\[ C(m)=\left(\frac{(2\pi)^m}{\gamma(m)(m-1)!}\right)^{1/(m-1/2)} \]  
depends only on $m$.  We compute that
\[ C(3) \leq 7.3517, \quad C(5) \leq 3.8332, \quad C(7) \leq 2.6336, \quad C(9) \leq 2.011. \]
It follows from Stirling's approximation and the fact that $\gamma(m)$ is increasing to $1$ that $C(m)$ is decreasing to $0$.

Let $\delta_F=d_F^{1/n}$ denote the root discriminant of $F$.   Taking $n$th roots we obtain from (\ref{est!!}) that
\begin{equation} \label{mainest}
\delta_F \leq (2h_m^-)^{2/\left((2m-1)n\right)} C(m).
\end{equation}

\section{Enumerating the list of extensions}

We now apply the results of \S 2 to enumerate CM extensions with small higher relative class number.  

We list those extensions with $h_m^- \leq 16$; we have chosen this bound to capture the smallest higher relative class number of a nonabelian field, and we note that this bound can easily be increased, if desired.  Let $NF_m(n)$ denote the set of totally real fields $F$ of degree $n$ that satisfy the bound (\ref{mainest}) with $h_m^-=16$, and let $NF_m = \bigcup_n NF_m(n)$.  

For $m=3$, the estimate (\ref{mainest}) then reads
\begin{equation} \label{deltaFest}
\delta_F \leq 32^{2/(5n)} C(3) \leq 7.3517 \cdot 4^{1/n}.
\end{equation}

By the (unconditional) Odlyzko bounds \cite{Martinet}, if $n \geq 7$ we have $\delta_F \geq 9.301$, but by (\ref{deltaFest}) we have $\delta_F \leq 8.962$, a contradiction.  In Table 2.3, for each degree $n \geq 2$ we list the upper bound on the root discriminant $\delta_F$, the corresponding Odlyzko bound $B_O$, the corresponding upper bound on the discriminant $d_F$, and the size of $NF_3(n)$.

\begin{center}
\vspace{1ex}
\textbf{Table 2.3.} Degree and Root Discriminant Bounds \\
\vspace{-2ex}
\[ 
\begin{array}{c|cccccc}
n & 2 & 3 & 4 & 5 & 6 & \geq 7 \\
\hline
\delta_F & \leq 14.703 & 11.670 & 10.397 & 9.701 & 9.263 & 8.962 \\
B_O & > 2.223 & 3.610 & 5.067 & 6.523 & 7.941 & 9.301 \\
\hline
d_F & \leq 216 & 1589 & 11684 & 85899 & 631505 & - \\
\#NF_3(n) & 65 & 48 & 64 & 8 & 6 & 0
\end{array}
\]
\vspace{0ex}
\end{center}
The fields $NF_3(n)$ of such small discriminant are well-known \cite{Tables}.

For $m=5$, arguing in a similar way we have 
\[ \delta_F \leq 32^{2/(9n)} C(5) \leq 3.8332 \cdot 2.1602^{1/n}, \] 
which already for $n \geq 4$ gives $\delta_F \leq 4.5273$, contradicting the Odlyzko bound $\delta_F > 5.067$.  In fact, we have $d_F \leq 28$ for $n=2$ and $d_F \leq 109$ for $n=3$, so that $\#NF_5(2)=8$ and $\#NF_5(3)=2$.  Proceeding in this way, we find:
\[
NF_m=
\begin{cases}
\{\Q,\Q(\sqrt{5}),\Q(\sqrt{8})\}, & \text{ if $m=7$}; \\
\{\Q,\Q(\sqrt{5})\}, & \text{ if $m=9$}; \\
\{\Q\}, & \text{ if $11 \leq m \leq 19$}; \\
\emptyset, & \text{ if $m \geq 21$}. \\
\end{cases} \]

Now for each such field $F$ and $m$, we have from (\ref{est!!}) that
\begin{equation} \label{boundk}
d_K \leq \lfloor 32^{2/(2m-1)} C(m)^n \rfloor d_F,
\end{equation}
leaving only finitely many possibilities for the CM extension $K/F$.  We can find these relative quadratic extensions explicitly by using a relative version of Hunter's theorem due to Martinet: see Cohen \cite[\S\S 9.2--9.3]{Cohen} for more details.  We obtain in this way $90,9,2,1$ extensions $K/F$ for $m=3,5,7,9$, and none for $m \geq 11$.

Next, using (\ref{lchi1-m}) and the higher relative class number formula (Proposition \ref{hrcnf}) we numerically compute the value $h_m^-/(w_m Q_m) \in \R$.  In order to recover this value exactly, it suffices to bound the size of the denominator.  We have $Q_m \in \{1,2\}$.  

To determine $w_m$, we apply Lemma \ref{qmidwm}.  If $q$ is the power of a prime and $q \mid m$, then $q$ is odd and $K$ contains the unique subfield of index $m$ of $\Q(\zeta_q)$.  In particular, this implies that $\phi(q)/m \mid 2n=[K:\Q]$, which already gives a bound on $w_m$.  To reduce the size of this bound further, we note that we also have $d_{\Q(\zeta_q)^{(m)}} \mid d_K$, so in particular $q \mid d_K$ whenever $\phi(q) > m$; furthermore, for any prime $\frakp$ of $K$ that is prime to $qd_K$, it is easy to see that the order of $N\frakp \in (\Z/q\Z)^*$ must divide $m$.  A prime power $q$ that passes these tests, the latter for sufficiently many primes $\frakp$, is very likely to divide $w_m$.  To compute $w_m$ exactly and verify that indeed $q \mid w_m$, we simply check if the $q$th cyclotomic polynomial over $K$ factors into polynomials of degree at most $m$.

In this way, we compute $h_m^-/Q_m \in \frac{1}{2}\Z$.  If this value is not an integer, then we immediately know $Q_m=2$.  Otherwise, we may apply the tests of Proposition \ref{Qmtests} to determine the value $Q_m$ in almost all cases.  We are lucky that in many cases where we cannot determine the value of $Q_m$, we nevertheless have $h_m^- \geq h_m^-/Q_m > 16$.  We have $6$ remaining cases.  One of which we can resolve as follows: for $F=\Q(\sqrt{30})$ and $K=\Q(\sqrt{-15},\sqrt{-2})$ with $h_3^-/Q_3=12$, we apply the divisibility result $8=h_3^-(\Q(\sqrt{-15})) \mid h_3^-(K)$ \cite[Corollary 3.6]{Kolster}, which implies $Q_3=2$, and hence $h_3^-=24$.  We were unable to resolve, and we leave it an open problem to compute the higher $Q$-index $Q_m$ in the other $5$ cases.  We expect the problem to be nontrivial for the reason that already characterizing Hasse's $Q$-index is quite intricate (see Hasse \cite{Hasse}).

Of the $90,9,2,1$ CM extensions for $m=3,5,7,9$, respectively, $26-29,4,1,0$ have $h_m^- \leq 16$, and they are listed in Tables 4.1--4.2.

\section{Tables}

In this section, we present the tables of CM extensions with higher relative class number $h_m^- \leq 16$.  Below, we list the totally real field $F$ and its discriminant $d_F$, the CM field $K$, its absolute discriminant $d_K$ and the norm of the relative discriminant $N(\frakd_{K/F})$, an element $\delta \in F$ such that $K=F(\sqrt{\delta})$, and the higher class number $h_m^-$.  As usual, we let $\zeta_k$ denote a primitive $k$th root of unity, $\omega=\zeta_3$ and $i=\zeta_4$, and we define $\lambda_k = \zeta_k+1/\zeta_k$, so that $\Q(\lambda_k)=\Q(\zeta_k)^+$.

The computations were performed in \textsf{Magma} \cite{Magma} and \textsf{Sage} \cite{Sage}; total computing time was about $2$ minutes.  

It is interesting to note that there is no CM extension with higher relative class number $h_m^-=2$.  Also, note that there is a misprint in the computation of $h_m^-(\zeta_{12})=1$ in Kolster \cite{Kolster}.  

\begin{center}
\vspace{1ex}
\textbf{Table 4.1.} CM extensions with higher relative class number $h_m^- \leq 16$ for $m \geq 5$ \\
\vspace{-2ex}
\[
\begin{array}{cc|cccc|c}
d_F & F & d_K & K & N(\frakd_{K/F}) & \delta  & h_5^- \\ \hline \hline
\rule{0pt}{2.5ex} 
1 & \Q             & 3     & \Q(\omega) & 3  & -3 & 1 \\ \hline
\rule{0pt}{2.5ex} 
1 & \Q             & 4     & \Q(i) & 4  & -4 & 5 \\
12 & \Q(\sqrt{3})  & 144   & \Q(\zeta_{12}) & 1  & -1 & 5 \\ \hline
\rule{0pt}{2.5ex} 
1 & \Q             & 7     & \Q(\sqrt{-7}) & 7  & -7 & 16 \\
\hline \ \\ 
\rule{0pt}{2.5ex} d_F & F & d_K & K & N(\frakd_{K/F}) & a  & h_7^- \\ \hline \hline
\rule{0pt}{2.5ex} 1 & \Q             & 3     & \Q(\omega) & 3  & -3 & 7
\end{array}
\] 
\end{center}

\newpage

\begin{center}
\vspace{1ex}
\textbf{Table 4.2.} CM extensions with higher relative class number $h_3^- \leq 16$ \\
\vspace{-2ex}
\[
\begin{array}{cc|cccc|c}
d_F & F            & d_K   & K       & \hspace{-1ex}N(\frakd_{K/F}) & \delta  & h_3^- \\ \hline \hline
\rule{0pt}{2.5ex} 
1 & \Q             & 3     & \Q(\omega)           & 3  & -3 & 1 \\
1 & \Q             & 4     & \Q(i)                & 4  & -1 & 1 \\
5 & \Q(\sqrt{5})   & 125   & \Q(\zeta_5)          & 5  & \hspace{-3ex}-\sqrt{5}(1+\sqrt{5})/2 & 1 \\
12 & \Q(\sqrt{3})  & 144   & \Q(\zeta_{12})       & 1  & -1 & 1 \\ \hline
\rule{0pt}{2.5ex} 
1 & \Q             & 8     & \Q(\sqrt{-2})        & 8  & -2 & 3 \\
1 & \Q             & 11    & \Q(\sqrt{-11})       & 11 & -11 & 3 \\
8 & \Q(\sqrt{2})   & 256   & \Q(\zeta_8)          & 4  & -2 & 3 \\
24 & \Q(\sqrt{6})  & 576   & \Q(\sqrt{-2},\omega) & 1  & -2 & 3 \\
44 & \Q(\sqrt{11}) & 1936  & \Q(\sqrt{-11}, i)    & 1  & -1 & 3 \\ \hline
\rule{0pt}{2.5ex} 
33 & \Q(\sqrt{33}) & 1089  & \Q(\sqrt{-11},\omega)& 1  & -3 & 3\text{ or }6? \\ \hline
\rule{0pt}{2.5ex} 
60 & \Q(\sqrt{15}) & 3600  & \Q(\sqrt{-15}, i)    & 1  & -1 & 4\text{ or }8? \\ \hline
\rule{0pt}{2.5ex} 
1 & \Q             & 7     & \Q(\sqrt{-7})        & 7  & -7 & 8 \\
1 & \Q             & 15    & \Q(\sqrt{-15})       & 15 & -15 & 8 \\
5 & \Q(\sqrt{5})   & 225   & \Q(\sqrt{-15},\omega)& 9  & -3 & 8 \\
28 & \Q(\sqrt{7})  & 784   & \Q(\sqrt{-7},i)      & 1  & -1 & 8 \\
49 & \Q(\lambda_7) & 16807 & \Q(\zeta_7)          & 7  & -7 & 8 \\ \hline
\rule{0pt}{2.5ex} 
88 & \Q(\sqrt{22}) & 7744  & \Q(\sqrt{-11},\sqrt{-2})& 16 & -2 & 9 \\ \hline
\rule{0pt}{2.5ex} 
1 & \Q             & 19    & \Q(\sqrt{-19})       & 19 & -19 & 11 \\
57 & \Q(\sqrt{57}) & 3249  & \Q(\sqrt{-19},\omega)& 1  & -3 & 11\text{ or }22? \\
76 & \Q(\sqrt{19}) & 5776  & \Q(\sqrt{-19},i)     & 1  & -1 & 11 \\ \hline
\rule{0pt}{2.5ex}
81 & \Q(\lambda_9) & 19683 & \Q(\zeta_9)          & 3  & -3 & 13 \\ \hline
\rule{0pt}{2.5ex}
1  & \Q            & 20    & \Q(\sqrt{-5})        & 20 & -5 & 15 \\ 
5  & \Q(\sqrt{5})  & 400   & \Q(\sqrt{-5},i)       & 16 & -1 & 15 \\
60 & \Q(\sqrt{15}) & 3600  & \Q(\sqrt{-5},\omega) & 1  & -1 & 15\text{ or }30? \\
2000 & \Q(\lambda_{20}) & 4000000 & \Q(\zeta_{20})& 1  & -1 & 15 \\ \hline
\rule{0pt}{2.5ex}
21 & \Q(\sqrt{21}) & 441   & \Q(\sqrt{-7},\omega) & 1  & -3 & 16 \\
8 & \Q(\sqrt{2})   & 1088  & \Q\left(\sqrt{2\sqrt{2}-5}\right) & 17 & 2\sqrt{2}-5 & 16 \\
17 & \Q(\sqrt{17}) & 2312  & \textstyle{\Q\left(\sqrt{-(5+\sqrt{17})/2}\right)} & 8 & \hspace{-3ex}-(5+\sqrt{17})/2 & 16\text{ or }32? \\
1125 & \Q(\lambda_{15}) & 1265625 & \Q(\zeta_{15}) & 1 & -15 & 16
\end{array}
\] 
\vspace{0ex}
\end{center}

\newpage

\end{document}